\documentclass[final,leqno]{siamltex704}
\usepackage{amsmath}
\usepackage{graphicx}
\usepackage[notcite,notref]{showkeys}
\usepackage{mathrsfs}
\usepackage{float}
\usepackage{amsfonts,amssymb}
\usepackage{dsfont}
\usepackage{pifont}
\usepackage{hyperref}
\usepackage{multirow}
\usepackage{color}
\newtheorem{defi}{Definition}[section]

\newtheorem{algorithm}[theorem]{WG-MFEM Scheme}
\newtheorem{algorithm-HMWG}[theorem]{Hybridized WG-MFEM Scheme}

\setlength{\parskip}{10\parskip}

\newcommand{\bq}{{\bf q}}

\newcommand{\bx}{{\bf x}}

\newcommand{\be}{{\bf e}}
\newcommand{\br}{{\bf r}}

\newcommand{\bv}{{\bf v}}

\def\T{{\mathcal T}}
\def\E{{\mathcal E}}

\def\M{{\mathcal M}}

\def\mH{{\mathbb{H}}}
\def\jump#1{{[\![#1[\!]}}

\def\bn{{\bf n}}
\def\bq{{\bf q}}

\def\l{{\langle}}
\def\r{{\rangle}}

\newcommand{\pT}{{\partial T}}

\def\3bar{{|\hspace{-.02in}|\hspace{-.02in}|}}
\def\bbQ{{\mathbb{Q}}}

\title{A Hybridized Formulation for the Weak Galerkin Mixed Finite Element Method}
\author{Lin Mu\thanks{Department of Mathematics, Michigan State University,
      East Lansing, MI 48824 (linmu@msu.edu)} \and Junping Wang\thanks{Division of Mathematical Sciences, National
Science Foundation, Arlington, VA 22230 (jwang@\break nsf.gov). The
research of Wang was supported by the NSF IR/D program, while
working at the Foundation. However, any opinion, finding, and
conclusions or recommendations expressed in this material are those
of the author and do not necessarily reflect the views of the
National Science Foundation.} \and Xiu Ye\thanks{Department of
Mathematics, University of Arkansas at Little Rock, Little Rock, AR
72204 (xxye@ualr.edu). This research was supported in part by
National Science Foundation Grant DMS-1115097.}}
\begin{document}
\maketitle

\begin{abstract}
This paper presents a hybridized formulation for the weak Galerkin
mixed finite element method (WG-MFEM) which was introduced and
analyzed in \cite{wy-m} for second order elliptic equations. The
WG-MFEM method was designed by using discontinuous piecewise
polynomials on finite element partitions consisting of polygonal or
polyhedral elements of arbitrary shape. The key to WG-MFEM is the
use of a discrete weak divergence operator which is defined and
computed by solving inexpensive problems locally on each element.
The hybridized formulation of this paper leads to a significantly
reduced system of linear equations involving only the unknowns
arising from the Lagrange multiplier in hybridization. Optimal-order
error estimates are derived for the hybridized WG-MFEM
approximations. Some numerical results are reported to confirm the
theory and a superconvergence for the Lagrange multiplier.
\end{abstract}

\begin{keywords}
weak Galerkin, finite element methods,  discrete weak divergence,
second-order elliptic problems, hybridized mixed finite element methods
\end{keywords}

\begin{AMS}
Primary, 65N30, 65N15; Secondary, 35J20, 76S05, 35J46.
\end{AMS}
\pagestyle{myheadings}

\section{Introduction}\label{Section:Introduction}

In this paper, we are concerned with new developments of weak
Galerkin finite element methods for partial differential equations.
Weak Galerkin (WG) \cite{wy, wy-m, wy1302, ww-survey} is a generic
finite element method for partial differential equations where the
differential operators (e.g., gradient, divergence, curl, Laplacian
etc) in the variational form are approximated by weak forms as
generalized distributions. This process often involves the solution
of inexpensive problems defined locally on each element. The
solution from the local problems can be regarded as a reconstruction
of the corresponding differential operators. The fundamental
difference between the weak Galerkin finite element method and other
existing methods is the use of weak functions and weak derivatives
(i.e., locally reconstructed differential operators) in the design
of numerical schemes based on existing variational forms for the
underlying PDE problems. Weak Galerkin is, therefore, a natural
extension of the conforming Galerkin finite element method. Due to
its great structural flexibility, the weak Galerkin finite element
method fits well to most partial differential equations by providing
the needed stability and accuracy in approximation.

The goal of this paper is to develop a new computational method
which reduces the computational complexity for the weak Galerkin
mixed finite element methods \cite{wy-m} by using the well-known
hybridization technique \cite{fv1965,ab}. Our model problem seeks a
vector-valued function $\bq=\bq(\bx)$, also known as flux function,
and a scalar function $u=u(\bx)$ defined in an open bounded
polygonal or polyhedral domain $\Omega\subset\mathbb{R}^d\; (d=2,3)$
satisfying
\begin{eqnarray}
\alpha\bq+\nabla u=0,\ \nabla\cdot \bq=f,\quad
\mbox{in}\;\Omega\label{mix}
\end{eqnarray}
and the following Dirichlet boundary condition
\begin{equation}
 u=g, \quad \mbox{on}\; \partial\Omega,\label{bc1}
\end{equation}
where $\alpha=(\alpha_{ij}(\bx))_{d\times d}\in
[L^{\infty}(\Omega)]^{d\times d}$ is a symmetric, uniformly positive
definite matrix on the domain $\Omega$. A weak formulation for
(\ref{mix})-(\ref{bc1}) seeks $\bq\in H(div,\Omega)$ and $u\in
L^2(\Omega)$ such that
\begin{eqnarray}
(\alpha\bq,\bv)-(\nabla\cdot\bv,u)&=&-\langle
g,\bv\cdot\bn\rangle_{\partial\Omega},
\quad\forall\bv\in H(div,\Omega)\label{w-mix1}\\
(\nabla\cdot\bq,w)&=&(f,w),\quad\forall w\in L^2(\Omega).\label{w-mix2}
\end{eqnarray}
Here $L^2(\Omega)$ is the standard space of square integrable
functions on $\Omega$, $\nabla\cdot\bv$ is the divergence of
vector-valued functions $\bv$ on $\Omega$, $H(div,\Omega)$ is the
Sobolev space consisting of vector-valued functions $\bv$ such that
$\bv\in [L^2(\Omega)]^d$ and $\nabla\cdot\bv \in L^2(\Omega)$,
$(\cdot,\cdot)$ stands for the $L^2$-inner product in $L^2(\Omega)$,
and $\langle\cdot,\cdot\rangle_{\partial\Omega}$ is the inner
product in $L^2(\partial\Omega)$.

Conforming Galerkin finite element methods based on the weak
formulation (\ref{w-mix1})-(\ref{w-mix2}) and finite dimensional
subspaces of $H(div,\Omega)\times L^2(\Omega)$ with piecewise
polynomials are known as mixed finite element methods (MFEM)
\cite{rt,fv1965}. MFEMs for (\ref{mix})-(\ref{bc1}) treat $\bq$ and
$u$ as independent unknown functions and are capable of providing
accurate approximations for both unknowns
\cite{ab,babuska,brezzi,sue,bf,bddf,bdm,rt,wang}. All the existing
MFEMs in literature possess local mass conservation that makes MFEM
a competitive numerical technique in many applications such as flow
of fluid in porous media including oil reservoir and groundwater
contamination simulation.

Based on the weak formulation (\ref{w-mix1})-(\ref{w-mix2}), a weak
Galerkin mixed finite element method (WG-MFEM) was developed in
\cite{wy-m} which provides accurate numerical approximations for
both the scalar and the flux variables. Like the existing MFEMs, the
WG-MFEM scheme conserves mass locally on each element. But unlike
the existing MFEMs, the WG-MFEM allows the use of finite element
partitions consisting of polygons ($d=2$) or polyhedra ($d=3$) of
arbitrary shape that satisfy the shape-regularity assumption
specified in \cite{wy-m}. It should be pointed out that some similar
features are shared by a number of recently developed numerical
methods such as Virtual Element Methods \cite{bbcmmr,bbm},
hybridizable discontinuous Galerkin methods \cite{cgl}, and mimetic
finite differences \cite{blm1,blm2}.

While weak functions and weak derivatives provide a great deal of
flexibility for WG methods, they also introduce more degrees of
freedom than the standard finite element method. The purpose of this
paper is to develop a hybridized formulation for the weak Galerkin
mixed finite element method \cite{wy-m} that shall reduce the
computational complexity significantly by solving a linear system
involving a small number of unknowns arising from an auxiliary
function called Lagrange multiplier. The Lagrange multiplier is
defined only on the element boundaries (also called wired-basket of
the finite element partition). As a result, the linear system that
costs the majority of the computing time depends on the dimension of
the finite element space defined on the wired-basket. Optimal-order
error estimates for the hybridized WG-MFEM (HWG-MFEM) approximations
are established in several discrete norms. Some numerical results
are presented to demonstrate the efficiency and power of the
hybridized WG-FEM.

Throughout the paper, we will follow the usual notation for Sobolev
spaces and norms \cite{ciarlet,bf,sue}. For any open bounded domain
$D\subset \mathbb{R}^d$ with Lipschitz continuous boundary, we use
$\|\cdot\|_{s,D}$ and $|\cdot|_{s,D}$ to denote the norm and
seminorms in the Sobolev space $H^s(D)$ for any $s\ge 0$,
respectively. The inner product in $H^s(D)$ is denoted by
$(\cdot,\cdot)_{s,D}$. The space $H^0(D)$ coincides with $L^2(D)$,
for which the norm and the inner product are denoted by $\|\cdot
\|_{D}$ and $(\cdot,\cdot)_{D}$, respectively. When $D=\Omega$, we
shall drop the subscript $D$ in the norm and inner product notation.

The paper is organized as follows. In Section 2, we review the
discrete weak divergence operator. In Section 3, we present a
hybridized weak Galerkin mixed finite element method. Section 4 is
devoted to a discussion of the relation between the WG-MFEM and its
hybridized version. In Section 5, we derive an error estimate for
the hybridized WG-MFEM. Finally, in Section 6, we report some
numerical results that demonstrate the efficiency and accuracy of
the hybridized WG-MFEM, including a superconvergence for the
Lagrange multiplier.

\section{Weak Divergence}\label{section2}

Let $K$ be a polygonal or polyhedral domain. A weak vector-valued
function on $K$ refers to a vector field $\bv=\{\bv_0, \bv_b\}$ with
$\bv_0\in [L^2(K)]^d$ and $\bv_b\in [L^2(\partial K)]^d$. The first
component $\bv_0$ carries the information of $\bv$ in $K$, and
$\bv_b$ represents partial or full information of $\bv$ on $\partial
K$. The choice of the boundary information that $\bv_b$ represents
is problem-dependent. Note that $\bv_b$ may not necessarily be
related to the trace of $\bv_0$ on $\partial K$ should a trace be
well-defined.

Denote by $V(K)$ the space of all weak vector-valued functions on
$K$; i.e.,
\begin{equation}\label{hi.888}
V(K) = \{\bv=\{\bv_0, \bv_b \}:\ \bv_0\in [L^2(K)]^d,\; \bv_b\in
[L^2(K)]^d\}.
\end{equation}
A weak divergence can be taken for any vector field in $V(K)$ by
following the definition introduced in \cite{wy-m}, which we
summarize as follows.

\begin{defi} \cite{wy-m}
For any $\bv\in V(K)$, the weak divergence of $\bv$ is defined as a
linear functional, denoted by $\nabla_w \cdot\bv$, on $H^1(K)$ whose
action on each $\varphi\in H^1(K)$ is given by
\begin{equation}\label{weak-divergence}
\langle\nabla_w\cdot\bv, \varphi\rangle_K := -(\bv_0,
\nabla\varphi)_K + \langle \bv_b\cdot\bn, \varphi\rangle_{\partial
K},
\end{equation}
where $(\cdot,\cdot)_K$ and $\langle \cdot, \cdot\rangle_{\partial
K}$ stands for the $L^2$-inner product in $L^2(K)$ and $L^2(\partial
K)$, respectively.
\end{defi}
\medskip

The Sobolev space $[H^1(K)]^d$ can be embedded into the space $V(K)$
by an inclusion map $i_V: \ [H^1(K)]^d\to V(K)$ defined as follows
$$
i_V(\bq) = \{\bq|_{K}, \bq|_{\partial K}\}.
$$
With the help of the inclusion map $i_V$, the Sobolev space
$[H^1(K)]^d$ can be viewed as a subspace of $V(K)$ by identifying
each $\bq\in [H^1(K)]^d$ with $i_V(\bq)$. Analogously, a weak function $\bv=\{\bv_0,\bv_b\}\in V(K)$ is said to be
in $[H^1(K)]^d$ if it can be identified with a function $\bq\in
[H^1(K)]^d$ through the above inclusion map. It is not hard to see
that $\nabla_w\cdot\bv=\nabla\cdot\bv$ if $\bv$ is a smooth function
in $[H^1(K)]^d$.

Next, we introduce a discrete weak divergence operator by
approximating $\left(\nabla_w\ \cdot\ \right)$ in a polynomial
subspace of the dual of $H^1(K)$. To this end, for any non-negative
integer $r\ge 0$, denote by $P_{r}(K)$ the set of polynomials on $K$
with degree $r$ or less.
\begin{defi}
A discrete weak divergence $(\nabla_{w,r}\cdot)$ is defined as the
unique polynomial $(\nabla_{w,r}\cdot\bv) \in P_r(K)$ satisfying the
following equation
\begin{equation}\label{dwd}
(\nabla_{w,r}\cdot\bv, \phi)_K = -(\bv_0,\nabla\phi)_K+
\langle \bv_b\cdot\bn,  \phi\rangle_{\partial K},\qquad \forall
\phi\in P_r(K).
\end{equation}
\end{defi}

\section{Hybridized WG-MFEM}\label{wg-mfem}

Let ${\cal T}_h$ be a finite element partition of the domain
$\Omega$ consisting of polygons in two dimensions or polyhedra in
three dimensions satisfying the shape-regularity condition specified
in \cite{wy-m}. For $T\in \T_h$, denote by $h_T$ its diameter and
$h=\max_{T\in\T_h} h_T$ the meshsize of $\T_h$. The set of all edges
or flat faces in ${\cal T}_h$ is denoted as ${\cal E}_h$, with the
subset ${\cal E}_h^0={\cal E}_h\backslash\partial\Omega$ consisting
of all the interior edges or flat faces.

Let $k\ge 0$ be any integer. On each element $T\in\T_h$, we denote
by $\bn$ the outward normal direction on the boundary $\pT$, and
define two local finite element spaces $W_{k+1}(T)=P_{k+1}(T)$ and
\begin{equation}\label{EQ:localWGFEM-vec}
V_k(T)=\{\bv=\{\bv_0,\bv_b\}: \; \bv_0\in [P_k(T)]^d, \
\bv_b|_e=v_b\bn,\;  v_b\in P_k(e), \ e\subset \pT\}.
\end{equation}
On the wired-basket $\E_h$, we introduce a finite element space
using piecewise polynomials of degree $k$:
\begin{equation}\label{EQ:WGFEM-lambda}
\Lambda_h=\{\lambda:\ \lambda|_e\in P_k(e),\; \forall e\in\E_h\}.
\end{equation}
Let $\Lambda_h^0\subset\Lambda_h$ be the subspace consisting of
functions with zero boundary value
\begin{equation}\label{EQ:WGFEM-lambda0}
\Lambda_h^0=\{\lambda\in\Lambda_h;\ \lambda|_{e}=0,\ \forall
e\subset\partial\Omega\}.
\end{equation}
We further introduce an element-wise stabilizer as follows
\begin{equation}\label{s-T}
s_T(\br,\bv)  =  h_T\langle
(\br_0-\br_b)\cdot\bn,\;(\bv_0-\bv_b)\cdot\bn \rangle_{\partial
T},\qquad \br, \bv\in V_k(T).
\end{equation}

Denote by $\nabla_{w,k+1}\cdot$ the discrete weak divergence
operator in the space $V_k(T)$ computed by using (\ref{dwd}) on each
element $T$. For simplicity of notation, from now on we shall drop
the subscript $k+1$ from the notation $\nabla_{w,k+1}\cdot$ for the
discrete weak divergence.

\medskip

\begin{algorithm-HMWG}\label{algorithm2}
For an approximate solution of (\ref{mix})-(\ref{bc1}), find
$\bq_h=\{\bq_0, \bq_b\}\in V_k(T)$, $u_h\in W_{k+1}(T)$,
$\lambda_h\in\Lambda_h$ such that $\lambda_h=Q_bg$ on
$\partial\Omega$ and
\begin{eqnarray}
s_{T}(\bq_h,\bv)+(\alpha \bq_0, \bv_0)_T-(\nabla_w\cdot\bv, u_h)_T
&=&-\langle \lambda_h, \bv_b\cdot\bn\rangle_\pT,\quad\forall\bv\in V_k(T), \label{wg1}\\
(\nabla_w\cdot\bq_h, w)_T&=&(f,\;w)_T,\quad\forall w\in
W_{k+1}(T),\label{wg2} \\  \sum_{T\in\T_h}
\langle\bq_b\cdot\bn,\phi\rangle_\pT&=&0,\quad\forall\phi\in\Lambda_h^0.\label{wg3}
\end{eqnarray}
Here $(Q_bg)|_e\in P_k(e)$ is the $L^2$ projection of the boundary
value $u=g$ on the edge $e\subset\partial\Omega$.
\end{algorithm-HMWG}

\medskip

The approximation for the primal variable $u$ is given by
$\lambda_h$ on $\E_h$ and $u_h$ on each element $T$. The flux $\bq$
is approximated by $\bq_0$ on each element, and $\bq_b$ is an
approximation of the normal component of $\bq$ on the element
boundary.

\medskip

The hybridized WG mixed finite element scheme
(\ref{wg1})-(\ref{wg3}) is an analogy of the hybridized mixed finite
element method \cite{fv1965} (see \cite{ab, bf, wang} for more
details). Note that the approximation $\bq_h$ and $u_h$ are defined
locally on each element $T\in\T_h$ so that $\bq_b$ assumes
multi-values on each interior edge. More specifically, unlike the WG
mixed finite element method introduced in \cite{wy-m}, the
hybridized WG-MFEM scheme (\ref{wg1})-(\ref{wg3}) does not assume
the ``continuity" (or single-value) of $\bq_b$ on each interior edge
$e\in\E_h^0$. The new variable $\lambda_h$, known as the Lagrange
multiplier, was added here to provide the necessary ``continuity" of
$\bq_b$ on $\E_h^0$ through the equation (\ref{wg3}).

\medskip

In the rest of this section, we shall reformulate the problem
(\ref{wg1})-(\ref{wg3}) by eliminating the unknowns $\bq_h$ and
$u_h$. The resulting linear system is symmetric and positive
definite in terms of the variable $\lambda_h$. Readers are referred
to \cite{ab} for a similar result for the standard mixed finite
element method.

Denote by $\M_h$ the collection of all the local finite element
spaces:
$$
\M_h = \bigotimes_{T\in\T_h} V_k(T)\times W_{k+1}(T).
$$
For any given $\theta\in \Lambda_h$, let $\{\bq_h^\theta,
u_h^\theta\}\in \M_h$ be the unique solution of the following
problem
\begin{eqnarray}
s_{T}(\bq_h^\theta,\bv)+(\alpha \bq_0^\theta,
\bv_0)_T-(\nabla_w\cdot\bv, u_h^\theta)_T
&=&-\langle \theta, \bv_b\cdot\bn\rangle_\pT,\quad\forall\bv\in V_k(T), \label{wg1-theta}\\
(\nabla_w\cdot\bq_h^\theta, w)_T&=&0,\qquad\quad\forall w\in
W_{k+1}(T).\label{wg2-theta}
\end{eqnarray}
By setting
$$
\mathbb{H}(\theta):= \{\bq_0^\theta, \bq_b^\theta,
u_h^\theta\}
$$
we see that $\mathbb{H}$ defines a linear operator
from $\Lambda_h$ to $\M_h$. For convenience, we decompose the
operator $\mathbb{H}$ into three components $\mathbb{H} =\{\mH_0,
\mH_b, \mH_u\}$ so that
$$
\bq_0^\theta = \mH_0(\theta),\quad \bq_b^\theta =
\mH_b(\theta),\quad u_h^\theta = \mH_u(\theta).
$$
The first two components of $\mathbb{H}$ can be grouped together to
give the following operator
$$
\mathbb{H}_{\bq}=\{\mH_0, \mH_b\}.
$$
Note that the third component $\mH_u(\theta)$ is a piecewise
polynomial of degree $k+1$ which is indeed a discrete harmonic
extension of $\theta$ in the domain $\Omega$ from the wired-basket
$\E_h$.

With the help of the linear operator $\mH$, we can reformulate the
hybridized WG-MFEM scheme (\ref{wg1})-(\ref{wg3}) into one involving
only the unknown variable on the wired-basket $\E_h$.

\begin{theorem}\label{Thm:ReducedSystem} The solution
$\lambda_h\in\Lambda_h$ arising from the hybridized WG-MFEM scheme
(\ref{wg1})-(\ref{wg3}) is a solution of the following problem: Find
$\theta\in \Lambda_h$ satisfying $\lambda_h=Q_bg$ on
$\partial\Omega$ and the following equation
\begin{equation}\label{EQ:Scheme4Lambda}
\sum_{T\in\T_h}\left\{
s_{T}(\mH_\bq(\lambda_h),\mH_\bq(\phi))+(\alpha\mH_0(\lambda_h),\mH_0(\phi))_T\right\}
=(f, \mH_u(\phi)),\qquad \forall \phi\in\Lambda_h^0.
\end{equation}
\end{theorem}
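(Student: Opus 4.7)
The plan is to test the three equations (\ref{wg1})--(\ref{wg3}) in a cross-coupled manner against the local harmonic extension operator $\mH$, so that $\bq_h$ and $u_h$ get replaced by $\mH_\bq(\lambda_h)$ and $\mH_u(\lambda_h)$; the symmetry of the stabilizer $s_T(\cdot,\cdot)$ and of the weighted inner product $(\alpha\cdot,\cdot)_T$ is what makes this cross-coupling produce exactly (\ref{EQ:Scheme4Lambda}). No estimates or compactness are needed.

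A preliminary observation is that $\nabla_w\cdot\mH_\bq(\theta)=0$ on each $T$ for every $\theta\in\Lambda_h$; indeed, $\nabla_w\cdot\mH_\bq(\theta)\in P_{k+1}(T)=W_{k+1}(T)$ by definition (\ref{dwd}), while (\ref{wg2-theta}) forces this polynomial to be $L^2$-orthogonal to all of $W_{k+1}(T)$. Fixing $\phi\in\Lambda_h^0$, I would then perform three substitutions. \textbf{(i)} Put $\bv=\mH_\bq(\phi)$ into (\ref{wg1}), sum over $T$, and use the preliminary observation to kill the term $(\nabla_w\cdot\mH_\bq(\phi),u_h)_T$:
\begin{equation*}
\sum_T\{s_T(\bq_h,\mH_\bq(\phi))+(\alpha\bq_0,\mH_0(\phi))_T\}
=-\sum_T\langle\lambda_h,\mH_b(\phi)\cdot\bn\rangle_{\pT}.
\end{equation*}
\textbf{(ii)} Put $\theta=\phi$ and $\bv=\bq_h$ into (\ref{wg1-theta}) and sum; the right side $-\sum_T\langle\phi,\bq_b\cdot\bn\rangle_{\pT}$ vanishes by (\ref{wg3}) since $\phi\in\Lambda_h^0$, and $\sum_T(\nabla_w\cdot\bq_h,\mH_u(\phi))_T=(f,\mH_u(\phi))$ by (\ref{wg2}) since $\mH_u(\phi)\in W_{k+1}(T)$ on each element. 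This yields
\begin{equation*}
\sum_T\{s_T(\mH_\bq(\phi),\bq_h)+(\alpha\mH_0(\phi),\bq_0)_T\}=(f,\mH_u(\phi)).
\end{equation*}
By the symmetry of $s_T$ and of $(\alpha\cdot,\cdot)_T$ the left sides of (i) and (ii) agree, so $(f,\mH_u(\phi))=-\sum_T\langle\lambda_h,\mH_b(\phi)\cdot\bn\rangle_{\pT}$. \textbf{(iii)} Finally, put $\theta=\lambda_h$ and $\bv=\mH_\bq(\phi)$ into (\ref{wg1-theta}) and discard the $\mH_u(\lambda_h)$ term via the preliminary observation to get
\begin{equation*}
\sum_T\{s_T(\mH_\bq(\lambda_h),\mH_\bq(\phi))+(\alpha\mH_0(\lambda_h),\mH_0(\phi))_T\}=-\sum_T\langle\lambda_h,\mH_b(\phi)\cdot\bn\rangle_{\pT},
\end{equation*}
whose right side is $(f,\mH_u(\phi))$ by (i)--(ii). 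This is (\ref{EQ:Scheme4Lambda}); the boundary identity $\lambda_h=Q_bg$ on $\partial\Omega$ is inherited directly from the hybridized scheme.

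The proof is essentially bookkeeping; the only conceptual move is the symmetric cross-testing that equates the bilinear forms in (i) and (ii). The point to be careful about is that $\lambda_h$ and $\phi$ are single-valued on $\E_h$ while $\bq_b$ and $\mH_b(\phi)$ are oriented through $\bn$ and generally disagree across interior edges, so the pairings $\langle\cdot,\cdot\rangle_{\pT}$ must be kept element-by-element until (\ref{wg3}) is applied to eliminate the boundary sum involving $\phi$.
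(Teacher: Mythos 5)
Your proposal is correct and follows essentially the same route as the paper: the paper's equation (\ref{Eq:August-15:001}) is your step (ii) (testing (\ref{wg1-theta}) with $\theta=\phi$, $\bv=\bq_h$, then invoking (\ref{wg2}) and (\ref{wg3})), and the paper's subsequent chain of identities is exactly your steps (i) and (iii), with the vanishing of the $(\nabla_w\cdot\mH_\bq(\phi),\cdot)_T$ terms justified by (\ref{wg2-theta}) just as in your preliminary observation. No gaps.
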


\begin{proof}
Note that the equation (\ref{wg3}) has test function $\phi\in
\Lambda_h^0$. Owing to the operator $\mH$, by letting $\theta=\phi$
in (\ref{wg1-theta}), we arrive at
$$
-\langle \phi, \bv_b\cdot\bn\rangle_\pT = s_{T}
(\mH_\bq(\phi),\bv)+(\alpha \mH_0(\phi), \bv_0)_T-(\nabla_w\cdot\bv,
\mH_u(\phi))_T
$$
for all $\bv \in V_k(T)$. In particular, by setting $\bv = \bq_h$
(i.e., the solution of (\ref{wg1})-(\ref{wg2})), we obtain
\begin{eqnarray*}
-\langle \phi, \bq_b\cdot\bn\rangle_\pT &=& s_{T}
(\mH_\bq(\phi),\bq_h)+(\alpha \mH_0(\phi),
\bq_0)_T-(\nabla_w\cdot\bq_h, \mH_u(\phi))_T\\
&=& s_{T} (\bq_h,\mH_\bq(\phi))+(\alpha\bq_0,\mH_0(\phi))_T-(f,
\mH_u(\phi))_T,
\end{eqnarray*}
where we have used (\ref{wg2}) in the second line. By summing over
all the element $T$ we have from (\ref{wg3}) that
\begin{equation}\label{Eq:August-15:001}
\sum_{T\in\T_h}\left\{s_{T}
(\bq_h,\mH_\bq(\phi))+(\alpha\bq_0,\mH_0(\phi))_T \right\} = (f,
\mH_u(\phi)),\qquad \forall \phi\in\Lambda_h^0.
\end{equation}

Next, by using (\ref{wg2-theta}) and then (\ref{wg1}) with
$\bv=\mH_\bq(\phi)$ we obtain
\begin{eqnarray*}
&&s_{T}(\bq_h,\mH_\bq(\phi))+(\alpha\bq_0,\mH_0(\phi))_T \\
&=&
s_{T}(\bq_h,\mH_\bq(\phi))+(\alpha\bq_0,\mH_0(\phi))_T - (\nabla_w\cdot \mH_\bq(\phi), u_h)_T\\
&=& -\langle \lambda_h, \mH_b(\phi)\cdot\bn\rangle_\pT\\
&=&s_{T}(\mH_\bq(\lambda_h),\mH_\bq(\phi))+(\alpha\mH_0(\lambda_h),\mH_0(\phi))_T
- (\nabla_w\cdot \mH_\bq(\phi), \mH_u(\lambda_h))_T\\
&=&s_{T}(\mH_\bq(\lambda_h),\mH_\bq(\phi))+(\alpha\mH_0(\lambda_h),\mH_0(\phi))_T.
\end{eqnarray*}
Substituting the above equation into (\ref{Eq:August-15:001}) yields
$$
\sum_{T\in\T_h}\left\{
s_{T}(\mH_\bq(\lambda_h),\mH_\bq(\phi))+(\alpha\mH_0(\lambda_h),\mH_0(\phi))_T\right\}
=(f, \mH_u(\phi)),\qquad \forall \phi\in\Lambda_h^0,
$$
which is precisely the equation (\ref{EQ:Scheme4Lambda}).
\end{proof}

\medskip

Concerning the solution existence and uniqueness for the reduced
system (\ref{EQ:Scheme4Lambda}), we have the following result.

\begin{theorem}\label{Thm:ReducedSystem-Existence} There exists one and only one
$\lambda_h\in\Lambda_h$ satisfying the equation
(\ref{EQ:Scheme4Lambda}) and the boundary condition $\lambda_h=Q_bg$
on $\partial\Omega$.
\end{theorem}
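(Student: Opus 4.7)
The plan is to use the variational structure of the reduced system \eqref{EQ:Scheme4Lambda} together with the fact that everything lives in a finite-dimensional space, so that existence and uniqueness can be established by proving only the uniqueness half. Concretely, fix any lift $\eta\in\Lambda_h$ of the boundary data with $\eta|_{\partial\Omega}=Q_b g$ and write $\lambda_h=\eta+\tilde\lambda$ with $\tilde\lambda\in\Lambda_h^0$. Then \eqref{EQ:Scheme4Lambda} becomes a square linear system for $\tilde\lambda$ on the finite-dimensional space $\Lambda_h^0$, driven by the bilinear form
$$
a(\theta,\phi):=\sum_{T\in\T_h}\Bigl\{s_{T}\bigl(\mH_{\bq}(\theta),\mH_{\bq}(\phi)\bigr)+\bigl(\alpha\mH_0(\theta),\mH_0(\phi)\bigr)_T\Bigr\},
$$
which is manifestly symmetric and positive semi-definite. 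It therefore suffices to show that $a$ is in fact positive definite on $\Lambda_h^0$, i.e.\ that $a(\theta,\theta)=0$ with $\theta\in\Lambda_h^0$ forces $\theta=0$.

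Suppose $a(\theta,\theta)=0$ for some $\theta\in\Lambda_h^0$. Using the uniform positive definiteness of $\alpha$ and the non-negativity of $s_T$, we get both $\mH_0(\theta)=0$ on every $T\in\T_h$ and $(\mH_0(\theta)-\mH_b(\theta))\cdot\bn=0$ on every $\partial T$, hence also $\mH_b(\theta)\cdot\bn=0$. I next substitute these back into the defining relation \eqref{wg1-theta}: for every $\bv=\{\bv_0,\bv_b\}\in V_k(T)$,
$$
(\nabla_w\cdot\bv,\mH_u(\theta))_T=\langle\theta,\bv_b\cdot\bn\rangle_{\pT}.
$$
By the discrete weak-divergence identity \eqref{dwd} applied with $\phi=\mH_u(\theta)\in P_{k+1}(T)=W_{k+1}(T)$, this rewrites as
$$
-(\bv_0,\nabla \mH_u(\theta))_T+\langle\bv_b\cdot\bn,\,\mH_u(\theta)-\theta\rangle_{\pT}=0\qquad\forall\,\bv\in V_k(T).
$$

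The main (and really the only) substantive step is to extract $\theta\equiv 0$ from this local identity. Choosing $\bv_b=0$ and $\bv_0=\nabla\mH_u(\theta)\in[P_k(T)]^d$ gives $\nabla\mH_u(\theta)=0$, so $\mH_u(\theta)\equiv c_T$ is constant on each $T$. Then choosing $\bv_0=0$ and $\bv_b|_e=v_b\bn$ with $v_b\in P_k(e)$ arbitrary, on each face $e\subset\pT$, yields $\langle v_b,\,c_T-\theta\rangle_e=0$ for all $v_b\in P_k(e)$. Because $c_T-\theta|_e\in P_k(e)$ already, this forces $\theta|_e=c_T$ on every face of every element.

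Finally, matching constants across shared faces of adjacent elements shows that the element-wise constants $c_T$ agree to a single global constant $c$ on the connected domain $\Omega$; since $\theta\in\Lambda_h^0$ vanishes on boundary faces, $c=0$ and so $\theta\equiv 0$. This proves coercivity of $a$ on $\Lambda_h^0$, hence the unique solvability of the reduced system for $\tilde\lambda$, and thus the existence of a unique $\lambda_h\in\Lambda_h$ satisfying both \eqref{EQ:Scheme4Lambda} and the boundary condition $\lambda_h=Q_b g$ on $\partial\Omega$. The only delicate point is the last test-function manipulation, where one must be careful that $\nabla\mH_u(\theta)$ really lies in $[P_k(T)]^d$ (which it does, because $\mH_u(\theta)\in P_{k+1}(T)$) so that it is an admissible choice for $\bv_0$.
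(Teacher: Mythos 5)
Your proposal is correct and follows essentially the same route as the paper: reduce to uniqueness via finite-dimensionality, deduce $\mH_0(\theta)=0$ and $\mH_b(\theta)\cdot\bn=0$ from $a(\theta,\theta)=0$, rewrite (\ref{wg1-theta}) via the discrete weak divergence, and test with $\bv_b=0$ and then $\bv_0=0$ to conclude $\mH_u(\theta)$ is piecewise constant and equals $\theta$ on faces, forcing both to be a global constant that vanishes on $\partial\Omega$. Your explicit remarks about the lift of the boundary data and the admissibility of $\bv_0=\nabla\mH_u(\theta)\in[P_k(T)]^d$ are fine elaborations of details the paper leaves implicit.
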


\begin{proof}
Since the number of unknowns equals the number of equations, it is
sufficient to verify the solution uniqueness. To this end, let
$\lambda_h^{(i)}\in \Lambda_h$ be two solutions of
(\ref{EQ:Scheme4Lambda}) satisfying the boundary condition
$\lambda_h^{(i)}=Q_bg,\ i=1,2$. It is clear that their difference,
denoted by $\sigma=\lambda_h^{(1)}-\lambda_h^{(2)}$, is a function
in $\Lambda_h^0$ and satisfies
$$
\sum_{T\in\T_h}\left\{
s_{T}(\mH_\bq(\sigma),\mH_\bq(\phi))+(\alpha\mH_0(\sigma),\mH_0(\phi))_T\right\}
=0,\qquad \forall \phi\in\Lambda_h^0.
$$
By setting $\phi=\sigma$ we arrive at
$$
s_{T}(\mH_\bq(\sigma),\mH_\bq(\sigma))+(\alpha\mH_0(\sigma),\mH_0(\sigma))_T
=0,\qquad \forall \ T\in\T_h.
$$
It follows that $\mH_0(\sigma)=0$ on each element $T$ and
$\mH_b(\sigma)\cdot\bn = \mH_0(\sigma)\cdot\bn$ on $\pT$, which
implies $\mH_b(\sigma)=0$ on $\pT$. Now using (\ref{wg1-theta}) we
have
\begin{equation}\label{EQ:August-15:100}
(\nabla_w\cdot\bv, \mH_u(\sigma))_T = \langle \sigma,
\bv_b\cdot\bn\rangle_\pT,\quad\forall \bv\in V_k(T), \ T\in\T_h.
\end{equation}
The definition of the discrete weak divergence can be applied to the
left-hand side to yield
\begin{equation}\label{EQ:August-15:110}
-(\bv_0, \nabla \mH_u(\sigma))_T+\langle \bv_b\cdot\bn,
\mH_u(\sigma)\rangle_\pT = \langle \sigma,
\bv_b\cdot\bn\rangle_\pT,\quad\forall \bv\in V_k(T).
\end{equation}
Thus, by setting $\bv_b=0$ and varying $\bv_0$ we obtain
$$
\nabla \mH_u(\sigma) = 0\  \Longrightarrow \ \mH_u(\sigma)=const,
\quad \mbox{in}\ T\in\T_h,
$$
which, together with varying $\bv_b$ in (\ref{EQ:August-15:110}),
leads to $\mH_u(\sigma)=\sigma$ on $\pT$. It follows that both
$\mH_u(\sigma)$ and $\sigma$ are constants in the domain $\Omega$
and vanishes on the boundary. Hence, $\sigma=\mH_u(\sigma)=0$. This
completes the proof of the theorem.
\end{proof}

\section{Equivalence between HWG-MFEM and WG-MFEM}
The goal of this section is to show that the hybridized WG-MFEM
approximate solution arising from (\ref{wg1})-(\ref{wg3}) coincides
with the solution from the corresponding WG-MFEM scheme introduced
as in \cite{wy-m}. To this end, consider the finite element space
$\tilde{V}_h = \bigotimes_{T\in\T_h} V_k(T)$. For any interior
edge/face $e\in\E_h^0$, denote by $T_1$ and $T_2$ the two elements
that share $e$ as a common side. Recall that functions
$\bv=\{\bv_0,\bv_b\}\in \tilde{V}_h$ have two-sided values on $e$
for the component $\bv_b$: one comes from the element $T_1$ and
other from $T_2$. Define the jump of $\bv$ along $e\in {\cal E}_h^0$
as follows
\begin{equation}\label{similarity}
[\![\bv]\!]_e= \bv_b|_{_{\partial
T_1}}\cdot\bn_{1}+\bv_b|_{_{\partial T_2}}\cdot\bn_{2},
\end{equation}
where $\bn_i$ is the outward normal direction on $e$ as seen from
the element $T_i$, $i=1,2$.

Let $k\ge 0$ be any integer. Following \cite{wy-m}, we introduce two
weak finite element spaces
\begin{eqnarray*}
V_h&=&\{\bv=\{\bv_0,\bv_b\}: \;\bv\in \tilde{V}_h,\
[\![\bv]\!]_e=0,\ \forall e\in\E_h^0\}, \\ W_h&=& \bigotimes_{T\in
\T_h} W_{k+1}(T).
\end{eqnarray*}
On each element $T\in \T_h$, we introduce three bilinear forms
\begin{eqnarray}
a_{T}(\br,\;\bv) & = & (\alpha\br_0,\bv_0)_T,
\label{a-T}\\
b_T(\bv,\ w)& = &(\nabla_w\cdot\bv,\;w)_T,\label{b-T}\\
c_T(\bv,\ \sigma)& = &\l \bv_b\cdot\bn,\ \sigma\r_{\pT},
\end{eqnarray}
for $\bv=\{\bv_0,\bv_b\},\br=\{\br_0,\br_b\}\in  V_k(T)$, $w\in
W_{k+1}(T)$ and $\sigma\in\Lambda_h$. In addition, set
$$
a_{s,T}(\br,\;\bv)=a_T(\br,\;\bv)+s_T(\br,\;\bv).
$$

\smallskip

\begin{algorithm}\label{algorithm1} \cite{wy-m}
For an approximation of the solution of (\ref{mix})-(\ref{bc1}),
find $\bar{\bq}_h=\{\bar{\bq}_0, \bar{\bq}_b\}\in V_h$ and
$\bar{u}_h\in W_h$ such that
\begin{eqnarray}
\sum_{T\in\T_h} a_{s,T}(\bar{\bq}_h,\bv)-\sum_{T\in\T_h} b_T(\bv, \bar{u}_h)&=&-\l g,\ \bv_b\cdot\bn\r_{\partial\Omega},\quad\forall\bv\in  V_h \label{wgm1}\\
\sum_{T\in\T_h} b_T(\bar{\bq}_h, w)&=&(f,\;w), \quad\quad\forall w\in W_h.\label{wgm2}
\end{eqnarray}
\end{algorithm}

\smallskip
The following Theorem shows that the WG-MFEM is equivalent to the
hybridized WG-MFEM.

\begin{theorem}\label{wg-hwg} Let
$(\bq_h,u_h,\lambda_h)\in \M_h\times \Lambda_h$ be the hybridized
WG-MFEM approximation of (\ref{mix})-(\ref{bc1}) obtained from
(\ref{wg1})-(\ref{wg3}), and $(\bar\bq_h,\bar{u}_h)\in  V_h\times
W_h$ be the WG-MFEM approximation of (\ref{mix})-(\ref{bc1}) arising
from (\ref{wgm1})-(\ref{wgm2}). Then we have $\bq_h=\bar\bq_h$ and
$u_h=\bar{u}_h$. Moreover, the hybridized WG-MFEM scheme
(\ref{wg1})-(\ref{wg3}) has one and only one solution.
\end{theorem}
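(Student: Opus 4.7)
My approach is to show that any solution $(\bq_h,u_h,\lambda_h)$ of the hybridized system (\ref{wg1})--(\ref{wg3}) automatically has $\bq_h\in V_h$, so that after summing the local equations over $\T_h$ the pair $(\bq_h,u_h)$ solves the WG-MFEM (\ref{wgm1})--(\ref{wgm2}). Invoking the unique solvability of WG-MFEM from \cite{wy-m}, this forces $\bq_h=\bar{\bq}_h$ and $u_h=\bar{u}_h$. Uniqueness of $\lambda_h$ is then an easy local consequence of (\ref{wg1}), and existence of the full triple is provided by Theorem \ref{Thm:ReducedSystem-Existence} combined with the local solvability of (\ref{wg1})--(\ref{wg2}) that underlies the definition of $\mH$.

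\textbf{Step 1: Flux continuity from the multiplier equation.} First I would read (\ref{wg3}) edge by edge. Since $\phi\in\Lambda_h^0$ vanishes on $\partial\Omega$, boundary edges contribute nothing. On each interior edge $e\in\E_h^0$ the outward normals from the two adjacent elements are opposite, so $\sum_T\langle\bq_b\cdot\bn,\phi\rangle_\pT$ collapses to $\sum_{e\in\E_h^0}\langle[\![\bq_h]\!]_e,\phi\rangle_e$. As $[\![\bq_h]\!]_e\in P_k(e)$ and $\phi|_e$ ranges over all of $P_k(e)$, I conclude $[\![\bq_h]\!]_e=0$, whence $\bq_h\in V_h$.

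\textbf{Step 2: Reducing to WG-MFEM.} Next I would sum (\ref{wg1}) over $T$ against an arbitrary $\bv\in V_h$. The single-valuedness of $\lambda_h$ on $\E_h$ together with the jump condition $[\![\bv]\!]_e=0$ cancels every interior-edge contribution of $\langle\lambda_h,\bv_b\cdot\bn\rangle_\pT$, while on boundary edges the identity $\lambda_h=Q_bg$ and $\bv_b\cdot\bn\in P_k(e)$ collapse the remaining terms to $-\langle g,\bv_b\cdot\bn\rangle_{\partial\Omega}$; this is precisely (\ref{wgm1}). Summing (\ref{wg2}) yields (\ref{wgm2}) verbatim. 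By unique solvability of the WG-MFEM, I conclude $\bq_h=\bar{\bq}_h$ and $u_h=\bar{u}_h$.

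\textbf{Step 3: Uniqueness, existence, and the main obstacle.} With $(\bq_h,u_h)$ identified, the difference $\sigma=\lambda_h^{(1)}-\lambda_h^{(2)}$ of any two candidate multipliers satisfies $\langle\sigma,\bv_b\cdot\bn\rangle_\pT=0$ for all $\bv\in V_k(T)$; taking $\bv_0=0$ with $\bv_b=v_b\bn$ supported on a single edge $e$ and $v_b=\sigma|_e$ there forces $\sigma\equiv 0$, so $\lambda_h$ is unique. For existence, Theorem \ref{Thm:ReducedSystem-Existence} delivers $\lambda_h$, and the local solvability of (\ref{wg1})--(\ref{wg2}) (implicit in the definition of $\mH$) then produces the matching $(\bq_h,u_h)$. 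The main obstacle I anticipate is the careful bookkeeping in Step 2: one must simultaneously exploit the single-valuedness of $\lambda_h$ on $\E_h$, the jump condition defining $V_h$, and the sign flip of outward normals across interior edges, in order to verify that the interior contributions of $\langle\lambda_h,\bv_b\cdot\bn\rangle_\pT$ cancel in pairs without leaving residual traces on $\E_h^0$.
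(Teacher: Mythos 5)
Your proof is correct and follows essentially the same route as the paper: extract flux continuity from (\ref{wg3}) so that $\bq_h\in V_h$, sum the local equations against $\bv\in V_h$ (using single-valuedness of $\lambda_h$ and $\lambda_h=Q_bg$ on $\partial\Omega$) to recover (\ref{wgm1})--(\ref{wgm2}), invoke unique solvability of the WG-MFEM, and then annihilate the multiplier difference by testing (\ref{wg1}) with $\bv=\{0,\sigma\bn\}$ edge by edge. The one small divergence is your existence argument: routing it through Theorem \ref{Thm:ReducedSystem-Existence} would require the converse of Theorem \ref{Thm:ReducedSystem} (that a solution of the reduced system lifts to a solution of (\ref{wg1})--(\ref{wg3})), which is not established; the cleaner route, implicit in the paper, is that the hybridized scheme is a square finite-dimensional linear system, so the uniqueness you have already proved implies existence.
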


\begin{proof} Assume that $(\bq_h,u_h,\lambda_h)\in \M_h\times \Lambda_h$ is a solution
of (\ref{wg1})-(\ref{wg3}). From the equation (\ref{wg3}) we see
that $\jump{\bq_h}=0$ on each interior edge $e\in\E_h^0$. Hence,
$\bq_h\in V_h$. Moreover, for any $\bv\in V_h$ so that
$\jump{\bv_b}|_e=0$ with $e\in\E_h^0$, we have
\begin{equation}\label{eu2}
\sum_{T\in\T_h}\langle \lambda_h, \bv_b\cdot\bn\rangle_{\partial
T}=\l g,\bv_b\cdot\bn\r_{\partial\Omega}.
\end{equation}
Therefore, for $\bv\in V_h$ and $w\in W_h$, the equations
(\ref{wg1})-(\ref{wg2}) leads to
\begin{eqnarray*}
\sum_{T\in\T_h}a_{s,T}(\bq_h,\bv)-\sum_{T\in\T_h}b_T(\bv, u_h)&=&-\l g,\bv_b\cdot\bn\r_{\partial\Omega},\\
\sum_{T\in\T_h}b_T(\bq_h, w)&=&(f,\;w),
\end{eqnarray*}
which shows that $(\bq_h,u_h)$ is a solution for the WG-MFEM. The
solution existence and uniqueness for the WG-MFEM \cite{wy-m} then
implies $\bq_h=\bar\bq_h$ and $u_h=\bar{u}_h$.

To show that the hybridized WG-MFEM scheme (\ref{wg1})-(\ref{wg3})
has a unique solution, assume that
$(\bq_h^{(i)},u_h^{(i)},\lambda_h^{(i)})\in \M_h\times \Lambda_h,\
i=1,2,$ are two solutions. Their difference,
$$
(\bq_h,u_h,\lambda_h):=(\bq_h^{(1)}-\bq_h^{(2)},
u_h^{(1)}-u_h^{(2)},\lambda_h^{(1)}-\lambda_h^{(2)}),
$$
must satisfy (\ref{wg1})-(\ref{wg3}) with $f\equiv 0$ and $g\equiv
0$. From the first part of this theorem, we see that
$$
\bq_h=0,\quad u_h=0.
$$
Thus, from (\ref{wg1}) we have
\begin{equation*}
0 = \langle \lambda_h, \bv_b\cdot\bn\rangle_\pT,\quad\forall \bv\in
V_k(T), \ T\in\T_h,
\end{equation*}
which implies $\lambda_h=0$ on $\pT$ for any $T\in\T_h$. This
completes the proof of the theorem.
\end{proof}

\medskip

The hybridized WG-MFEM scheme (\ref{wg1})-(\ref{wg3}) consists of
two parts: a local system (\ref{wg1})-(\ref{wg2}) defined on each
element $T\in\T_h$ and a global system (\ref{wg3}). This scheme has
a reformulation of (\ref{EQ:Scheme4Lambda}) which involves only the
Lagrange multiplier $\lambda_h$. It should be pointed out that the
reduced system (\ref{EQ:Scheme4Lambda}) is symmetric and  positive
definite. Since the Lagrange multiplier $\lambda_h$ is defined only
on the element boundary as its unknowns, the size of the linear
system for the hybridized WG-MFEM is significantly smaller than the
one for the WG-MFEM scheme. Thus, the hybridized WG-MFEM is an
efficient implementation of the WG-MFEM scheme proposed and analyzed
in \cite{wy-m}.

\section{Error Estimates}\label{section-projections} Owing to the
equivalence between the hybridized WG-MFEM scheme
(\ref{wg1})-(\ref{wg3}) and the WG-MFEM scheme of \cite{wy-m}, all
the error estimates developed in \cite{wy-m} can be applied to the
approximate solution $(\bq_h,u_h)$ obtained from
(\ref{wg1})-(\ref{wg3}). What remains to study is the convergence
and error estimate for the Lagrange multiplier $\lambda_h$.

On each element $T\in \T_h$, denote by $Q_0$ the $L^2$ projection
from $[L^2(T)]^d$ to $[P_k(T)]^d$, by $Q_b$ the $L^2$ projection
from $L^2(e)$ to $P_k(e)$, and by $\bbQ_h$ the $L^2$ projection from
$L^2(T)$ onto $P_{k+1}(T)$. With the help of these operators, we
define a projection operator $Q_h: V(T) \to V_k(T)$ so that for any
$\bv=\{\bv_0,v_b\bn\}\in V(T)$
\begin{equation}\label{Qh-def}
Q_h \bv=\{Q_0\bv_0, (Q_bv_b)\bn\}.
\end{equation}

In the finite element space $W_h$, we introduce the following norm:
\begin{equation}\label{discreteH1norm}
\|w\|_{1,h}^2=\sum_{T\in {\cal T}_h}\|\nabla
w\|_T^2+h^{-1}\sum_{e\in\E_h} \|\jump{w}\|_e^2.
\end{equation}
In the space $V_h$, we use
$$
\3bar\bv\3bar^2=\sum_{T\in\T_h}a_{s,T}(\bv,\bv).
$$
For simplicity of notation, we shall use $\lesssim$ to denote {\em
``less than or equal to"} up to a constant independent of the mesh
size, variables, or other parameters appearing in the inequalities
of this section.

\smallskip

\begin{theorem}\label{THM:1stErrorEstimate} \cite{wy-m}
Let $(\bq_h,u_h,\lambda_h)\in \M_h\times\Lambda_h$ be the
approximate solution of (\ref{mix})-(\ref{bc1}) arising from the
hybridized WG-MFEM scheme (\ref{wg1})-(\ref{wg3}) of order $k\ge 0$.
Assume that the exact solution $(\bq, u)$ of
(\ref{w-mix1})-(\ref{w-mix2}) is regular such that $u\in
H^{k+2}(\Omega)$ and $\bq\in [H^{k+1}(\Omega)]^d$. Then, we have
\begin{eqnarray}
\3bar \bq_h-Q_h\bq\3bar+\|u_h-\bbQ_hu\|_{1,h} \lesssim
h^{k+1}\left(\|u\|_{k+2}+\|\bq\|_{k+1}\right).\label{qh-qq}
\end{eqnarray}
If, in addition, the problem (\ref{mix}) with homogeneous Dirichlet
boundary condition $u=0$ has the usual $H^2$-regularity, then the
following optimal order error estimate in $L^2$ holds true:
\begin{eqnarray}
\|u_h-\bbQ_hu\|\lesssim
h^{k+2}\left(\|u\|_{k+2}+\|\bq\|_{k+1}\right)\label{ul2}.
\end{eqnarray}
\end{theorem}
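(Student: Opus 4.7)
The plan is to reduce everything to the WG-MFEM analysis of \cite{wy-m} via Theorem \ref{wg-hwg}, which identifies $(\bq_h,u_h)$ with $(\bar\bq_h,\bar u_h)$. So once the equivalence is invoked, the Lagrange multiplier disappears from the error analysis, and the task becomes bounding $\bq_h-Q_h\bq$ and $u_h-\bbQ_h u$ for the non-hybridized scheme.

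First I would establish the key commuting property for the discrete weak divergence with respect to the projections: for a smooth flux $\bq$ and any $\phi\in P_{k+1}(T)$,
\begin{equation*}
(\nabla_w\cdot Q_h\bq,\phi)_T = -(Q_0\bq,\nabla\phi)_T + \langle (Q_b(\bq\cdot\bn))\bn\cdot\bn,\phi\rangle_{\partial T} = (\nabla\cdot\bq,\phi)_T = (\bbQ_h(\nabla\cdot\bq),\phi)_T,
\end{equation*}
after using $\nabla\phi\in[P_k(T)]^d$ to remove $Q_0$, integration by parts on $T$, and the definition of $Q_b$. This yields $\nabla_w\cdot Q_h\bq=\bbQ_h(\nabla\cdot\bq)$ on each $T$. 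Subtracting (\ref{wgm1})–(\ref{wgm2}) from their exact-solution counterparts and using this commutativity, I would derive the error equations
\begin{equation*}
\sum_{T}a_{s,T}(\bq_h-Q_h\bq,\bv)-\sum_{T}b_T(\bv,u_h-\bbQ_hu)=\ell_1(\bv),\qquad \sum_{T}b_T(\bq_h-Q_h\bq,w)=0,
\end{equation*}
where $\ell_1(\bv)$ collects the consistency residuals $(\alpha\bq-\alpha Q_0\bq,\bv_0)_T$ and $s_T(Q_h\bq,\bv)$, and the weak-continuity of $\bv$ plus single-valued traces of $u$ kill the boundary artifact against $g$.

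Next, by setting $\bv=\bq_h-Q_h\bq$ and $w=u_h-\bbQ_hu$, the mixed terms cancel via the second error equation, leaving $\3bar\bq_h-Q_h\bq\3bar^2 = \ell_1(\bq_h-Q_h\bq)$. I would then bound $\ell_1$ by standard interpolation-error estimates — the stabilizer term $s_T(Q_h\bq,Q_h\bq)^{1/2}\lesssim h^{k+1}\|\bq\|_{k+1,T}$ and the projection error for $\alpha Q_0\bq$ — giving the $\3bar\cdot\3bar$ half of (\ref{qh-qq}). For the $\|u_h-\bbQ_hu\|_{1,h}$ part, I would invoke the discrete inf-sup property proved in \cite{wy-m}: there is $\bv\in V_h$ with $\3bar\bv\3bar\lesssim\|u_h-\bbQ_hu\|_{1,h}$ and $\sum_T b_T(\bv,u_h-\bbQ_hu)\gtrsim\|u_h-\bbQ_hu\|_{1,h}^2$. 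Feeding this into the first error equation gives the $\|\cdot\|_{1,h}$ bound.

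For the $L^2$ estimate (\ref{ul2}), I would run an Aubin–Nitsche duality argument. Let $(\bpsi,\varphi)$ solve $\alpha\bpsi+\nabla\varphi=0$, $\nabla\cdot\bpsi=u_h-\bbQ_hu$ in $\Omega$ with $\varphi=0$ on $\partial\Omega$; by $H^2$-regularity $\|\varphi\|_2+\|\bpsi\|_1\lesssim\|u_h-\bbQ_hu\|$. Testing the error equations against $Q_h\bpsi$ and $\bbQ_h\varphi$ and using the commuting property $\nabla_w\cdot Q_h\bpsi=\bbQ_h(\nabla\cdot\bpsi)=u_h-\bbQ_hu$, I expect to obtain $\|u_h-\bbQ_hu\|^2$ bounded by $\3bar\bq_h-Q_h\bq\3bar\cdot\3bar Q_h\bpsi\3bar$ plus consistency residuals, each of which carries one extra factor of $h$ from the $H^2$-regularity of $(\bpsi,\varphi)$. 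Combining with (\ref{qh-qq}) produces the $h^{k+2}$ rate.

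The main obstacle I anticipate is the inf-sup step for controlling $\|u_h-\bbQ_hu\|_{1,h}$: constructing the $V_h$ test function requires a Fortin-type operator on weak-Galerkin spaces that respects both the normal-trace structure $\bv_b=v_b\bn$ and the shape-regularity on general polytopal meshes. Once that technical device from \cite{wy-m} is available, the rest of the argument is a routine assembly of the error equations, consistency bounds, and duality.
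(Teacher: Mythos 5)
Your opening reduction is exactly the paper's argument: Theorem \ref{wg-hwg} identifies $(\bq_h,u_h)$ with the WG-MFEM solution $(\bar\bq_h,\bar u_h)$ of (\ref{wgm1})--(\ref{wgm2}), and the paper then simply quotes the error estimates of \cite{wy-m} without further proof. So at the level at which the paper operates, your first paragraph is the whole proof and is correct.

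Since you go on to reconstruct the internal analysis of \cite{wy-m}, one step deserves a concrete warning: the clean commuting identity $\nabla_w\cdot Q_h\bq=\bbQ_h(\nabla\cdot\bq)$ does \emph{not} hold for this element. The discrete weak divergence is taken in $P_{k+1}(T)$, so a test function $\phi\in P_{k+1}(T)$ has trace in $P_{k+1}(e)$ on each edge, while $Q_b$ projects only onto $P_k(e)$; hence $\langle Q_b(\bq\cdot\bn),\phi\rangle_{\pT}=\langle \bq\cdot\bn,Q_b\phi\rangle_{\pT}\neq\langle\bq\cdot\bn,\phi\rangle_{\pT}$ in general, and a residual $\langle\bq\cdot\bn,Q_b\phi-\phi\rangle_{\pT}$ survives. (Your interior step $(Q_0\bq,\nabla\phi)_T=(\bq,\nabla\phi)_T$ is fine; it is only the boundary term that breaks.) Consequently your second error equation $\sum_T b_T(\bq_h-Q_h\bq,w)=0$ is not exact, and the first error equation must carry the additional consistency term $\ell_T(u,\bv)=\langle(\bv_0-\bv_b)\cdot\bn,\;\bbQ_hu-u\rangle_{\pT}$ --- precisely the term the paper isolates in (\ref{key111})--(\ref{key118}) and in the identity (\ref{err1}) used later for the Lagrange-multiplier estimate. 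These residuals pair two projection errors and are of the required order, so the rates in (\ref{qh-qq}) and (\ref{ul2}) survive, but the analysis is not the exact cancellation you describe. The remaining ingredients (inf-sup control of $\|u_h-\bbQ_hu\|_{1,h}$ and the Aubin--Nitsche duality for (\ref{ul2})) are the right ones and match \cite{wy-m}.
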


\medskip

For any $\bv=\{\bv_0, \bv_b \}\in V_k(T)$ and $w\in H^1(T)$, using
the definition of $\bbQ_h$ and the integration by parts we obtain
\begin{eqnarray}
(\nabla_w\cdot \bv,\; \bbQ_hw)_T &=& -(\bv_0,\; \nabla (\bbQ_hw))_T
+ \langle \bv_b\cdot\bn,\;
\bbQ_hw\rangle_{\partial T}\nonumber\\
&=& (\nabla\cdot \bv_0,\; \bbQ_hw)_T + \langle \bv_b\cdot\bn - \bv_0\cdot
\bn,\;
\bbQ_hw\rangle_{\partial T}\nonumber\\
&=& (\nabla\cdot \bv_0,\; w)_T + \langle \bv_b\cdot\bn - \bv_0\cdot \bn,\;
\bbQ_hw\rangle_{\partial T}\nonumber\\
&=& -(\bv_0,\; \nabla w)_T + \langle \bv_0\cdot\bn,\;
w\rangle_{\partial T}+\langle \bv_b\cdot\bn - \bv_0\cdot \bn,\; \bbQ_hw\rangle_{\partial T}\nonumber\\
&=& -(\bv_0,\; \nabla w)_T + \langle \bv_0\cdot\bn-\bv_b\cdot\bn,\;
w-\bbQ_hw\rangle_{\partial T} + \langle \bv_b\cdot\bn,\;
w\rangle_{\partial T}.\label{key111}
\end{eqnarray}
By introducing the following notation
$$
\ell_T(w,\bv)=\langle \bv_0\cdot\bn-\bv_b\cdot\bn,\;\bbQ_hw-
w\rangle_{\pT},
$$
we have from (\ref{key111}) that
\begin{eqnarray}
(\nabla_w\cdot \bv,\; \bbQ_hw)_T =-(\bv_0,\; \nabla w)_T+ \langle
\bv_b\cdot\bn,\; w\rangle_{\pT} +\ell_T(w,\bv). \label{key118}
\end{eqnarray}

\begin{lemma} Let $(\bq_h,u_h,\lambda_h)\in \M_h\times\Lambda_h$ be the
approximate solution of (\ref{mix})-(\ref{bc1}) arising from the
hybridized WG-MFEM scheme (\ref{wg1})-(\ref{wg3}) of order $k\ge 0$.
On each element $T\in\T_h$, the following identity holds true
\begin{equation}\label{err1}
\begin{split}
&s_T(\be_h,\bv)+(\alpha
\be_0,\bv_0)_T-(\nabla_w\cdot\bv,\epsilon_h)_T+\l\delta_h,
\bv_b\cdot\bn\r_\pT\\
=\ &s_T(Q_h\bq,\bv)+\ell_T(u,\bv),\qquad \forall\bv\in V_k(T),
\end{split}
\end{equation}
where
$$
\be_h=\{\be_0,\be_b\}=Q_h\bq-\bq_h,\quad
\epsilon_h=\bbQ_hu-u_h,\quad \delta_h=Q_bu-\lambda_h,
$$
stand for the error between the hybridized WG-MFEM approximate
solution and the $L^2$ projection of the exact solution.
\end{lemma}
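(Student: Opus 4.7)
The plan is to obtain (\ref{err1}) as a residual equation. I would first rewrite the scheme (\ref{wg1}) in the equivalent ``all terms on the left'' form $s_T(\bq_h,\bv)+(\alpha\bq_0,\bv_0)_T-(\nabla_w\cdot\bv,u_h)_T+\langle\lambda_h,\bv_b\cdot\bn\rangle_\pT=0$ and subtract it from the same expression with the discrete unknowns $(\bq_h,u_h,\lambda_h)$ replaced by the $L^2$-projected exact quantities $(Q_h\bq,\bbQ_h u,Q_b u)$. The difference is precisely the left-hand side of (\ref{err1}) in terms of $\be_h$, $\epsilon_h$, and $\delta_h$, so the task reduces to showing that the ``projected-scheme'' expression
\[
s_T(Q_h\bq,\bv)+(\alpha Q_0\bq,\bv_0)_T-(\nabla_w\cdot\bv,\bbQ_h u)_T+\langle Q_b u,\bv_b\cdot\bn\rangle_\pT
\]
equals $s_T(Q_h\bq,\bv)+\ell_T(u,\bv)$.

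To evaluate the projected-scheme expression, I would invoke identity (\ref{key118}) to replace $(\nabla_w\cdot\bv,\bbQ_h u)_T$ by its $\nabla u$-based counterpart, and then apply the flux equation $\alpha\bq+\nabla u=0$ from (\ref{mix}) to turn $(\bv_0,\nabla u)_T$ into $-(\alpha\bq,\bv_0)_T$. The resulting boundary mismatch $\langle Q_b u-u,\bv_b\cdot\bn\rangle_\pT$ vanishes edgewise, because $\bv_b\cdot\bn|_e\in P_k(e)$ and $Q_b$ is the $L^2$-projection onto $P_k(e)$; this eliminates the $\langle\bv_b\cdot\bn,u\rangle_\pT$ term coming from (\ref{key118}) in favor of zero.

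The main obstacle I expect will be handling the remaining bulk cross term $(\alpha(Q_0\bq-\bq),\bv_0)_T$ that appears after combining $(\alpha Q_0\bq,\bv_0)_T$ with $-(\alpha\bq,\bv_0)_T$. Since $Q_0$ is the ordinary (unweighted) $L^2$-projection onto $[P_k(T)]^d$, the orthogonality $(Q_0\bq-\bq,\psi)_T=0$ forces this cross term to vanish only when $\alpha\bv_0$ itself lies in $[P_k(T)]^d$, which is ensured under the standard simplifying hypothesis that $\alpha$ is piecewise constant on $\T_h$. With that in force, the cross term drops out and the projected-scheme expression collapses to exactly $s_T(Q_h\bq,\bv)+\ell_T(u,\bv)$, which is the identity (\ref{err1}).
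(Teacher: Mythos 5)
Your proof is correct and follows essentially the same route as the paper's: test the flux equation against $\bv_0$, convert $(\bv_0,\nabla u)_T$ via the identity (\ref{key118}), use the edgewise $L^2$-orthogonality of $Q_b$ (valid since $\bv_b\cdot\bn=v_b\in P_k(e)$), and subtract the scheme equation (\ref{wg1}); the only difference is cosmetic, in that you subtract first and then verify consistency of the projected expression, while the paper manipulates first and subtracts last. Your remark about the cross term $(\alpha(Q_0\bq-\bq),\bv_0)_T$ is well taken: the paper silently replaces $(\alpha\bq,\bv_0)_T$ by $(\alpha Q_0\bq,\bv_0)_T$ ``using the definition of $Q_h$,'' a step that is exact only when $\alpha\bv_0\in[P_k(T)]^d$ (e.g.\ element-wise constant $\alpha$), so for general $\alpha\in[L^\infty(\Omega)]^{d\times d}$ an extra consistency term would have to be carried along or estimated --- you have identified a gap in the paper's own argument rather than introduced one in yours.
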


\begin{proof}
For any $\bv=\{\bv_0,\bv_b\}\in V_k(T)$, we test the first equation
of (\ref{mix}) against $\bv_0$ to obtain
$$
(\alpha\bq,\bv_0)_T+(\nabla u,\bv_0)_T=0.
$$
By applying the identity (\ref{key118}) to the term $(\nabla u,
\bv_0)_T$ we arrive at
\begin{eqnarray*}
(\alpha\bq,\bv_0)_T-(\nabla_w\cdot \bv,\bbQ_hu)_T+\langle
\bv_b\cdot\bn,u\rangle_{\pT}=\ell_T(u,\bv).
\end{eqnarray*}
Adding $s_T(Q_h\bq,\bv)$ to both sides of the above equation and
then using the definition of $Q_h$, we have
\begin{eqnarray*}
s_T(Q_h\bq,\bv)+(\alpha (Q_0\bq),\bv_0)_T-(\nabla_w\cdot
\bv,\bbQ_hu)_T+\langle \bv_b\cdot\bn,u\rangle_{\pT}
=\ell_T(u,\bv)+s_T(Q_h\bq,\bv).
\end{eqnarray*}
Now subtracting (\ref{wg1}) from the above equation gives
(\ref{err1}). This completes the proof of the lemma.
\end{proof}

\medskip

Let $T\in\T_h$ be an element with $e$ as an edge/face. For any
function $\varphi\in H^1(T)$, the following trace inequality has
been derived for general polyhedral partitions satisfying the shape
regularity assumptions {\bf A1 - A4} (see \cite{wy-m} for details):
\begin{equation}\label{trace}
\|\varphi\|_{e}^2 \lesssim \left( h_T^{-1} \|\varphi\|_T^2 + h_T
\|\nabla \varphi\|_{T}^2\right).
\end{equation}

\begin{theorem} Under the assumptions of Theorem
\ref{THM:1stErrorEstimate}, we have the following error estimate:
\begin{equation}\label{err-l}
\left(\sum_{T\in\T_h}h_T\|Q_bu -
\lambda_h\|_{\pT}^2\right)^{\frac12}\lesssim
h^{k+2}\left(\|u\|_{k+2}+\|\bq\|_{k+1}\right).
\end{equation}
\end{theorem}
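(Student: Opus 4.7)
My plan is to start from the elementwise error identity (\ref{err1}) and test it against a specifically engineered $\bv\in V_k(T)$ so that the pairing term $\l\delta_h,\bv_b\cdot\bn\r_\pT$ collapses to $\|\delta_h\|_\pT^2$. The natural choice is $\bv=\{0,\delta_h\bn\}$, which is a legitimate element of $V_k(T)$ because $\delta_h|_e=(Q_bu-\lambda_h)|_e\in P_k(e)$ by construction.

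With this test function the $(\alpha\be_0,\bv_0)_T$ contribution drops out, and applying the definition (\ref{dwd}) of the discrete weak divergence with $\phi=\epsilon_h\in P_{k+1}(T)$ gives $(\nabla_w\cdot\bv,\epsilon_h)_T=\l\delta_h,\epsilon_h\r_\pT$. The identity (\ref{err1}) then reduces to
\[
\|\delta_h\|_\pT^2 \;=\; s_T(Q_h\bq,\bv)+\ell_T(u,\bv)-s_T(\be_h,\bv)+\l\delta_h,\epsilon_h\r_\pT,
\]
so that what remains is to bound each of the four terms on the right, divide through by $\|\delta_h\|_\pT$, multiply by $h_T^{1/2}$, square, and sum over $T\in\T_h$.

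For each term I would exploit the $L^2$-orthogonality of $Q_b$ (to strip away the non-$P_k$ components when tested against $\delta_h$), apply the trace inequality (\ref{trace}), and invoke the standard approximation bounds for $Q_0$ and $\bbQ_h$. The first term reduces, via the orthogonality $\l\delta_h,\bq\cdot\bn-Q_b(\bq\cdot\bn)\r_\pT=0$, to $h_T|\l(Q_0\bq-\bq)\cdot\bn,\delta_h\r_\pT|\lesssim h_T^{k+3/2}\|\bq\|_{k+1,T}\|\delta_h\|_\pT$, and an analogous computation based on $\l\delta_h,u-Q_bu\r_\pT=0$ yields $|\ell_T(u,\bv)|\lesssim h_T^{k+3/2}\|u\|_{k+2,T}\|\delta_h\|_\pT$. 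The stabilizer piece satisfies $|s_T(\be_h,\bv)|\le h_T^{1/2}\3bar\be_h\3bar_T\|\delta_h\|_\pT$ directly from its definition and Cauchy--Schwarz, while the trace inequality gives $|\l\delta_h,\epsilon_h\r_\pT|\lesssim(h_T^{-1/2}\|\epsilon_h\|_T+h_T^{1/2}\|\nabla\epsilon_h\|_T)\|\delta_h\|_\pT$. Executing the sum then leads to
\[
\sum_{T\in\T_h}h_T\|\delta_h\|_\pT^2\lesssim h^{2k+4}\bigl(\|u\|_{k+2}^2+\|\bq\|_{k+1}^2\bigr)+h^2\3bar\be_h\3bar^2+\|\epsilon_h\|^2+h^2\sum_{T\in\T_h}\|\nabla\epsilon_h\|_T^2.
\]

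The delicate step will be controlling $\l\delta_h,\epsilon_h\r_\pT$ at the sharp order: its trace bound produces a bare $\|\epsilon_h\|_T$ after multiplication by $h_T^{1/2}$, so an $L^2$-estimate of order $h^{k+2}$, rather than the energy order $h^{k+1}$, is required for $\epsilon_h$. This is precisely the superconvergence bound (\ref{ul2}) of Theorem \ref{THM:1stErrorEstimate}, which is why the conclusion inherits its $H^2$-regularity hypothesis. Once (\ref{ul2}) is applied to $\|\epsilon_h\|^2$ and the energy estimate (\ref{qh-qq}) is applied both to $\3bar\be_h\3bar$ and to $\sum_T\|\nabla\epsilon_h\|_T^2\le\|\epsilon_h\|_{1,h}^2$, every term on the right-hand side above is of order $h^{2k+4}(\|u\|_{k+2}+\|\bq\|_{k+1})^2$, and taking square roots yields (\ref{err-l}).
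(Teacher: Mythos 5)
Your proposal is correct and follows essentially the same route as the paper: the test function $\bv=\{0,\delta_h\bn\}$ in the error identity (\ref{err1}), Cauchy--Schwarz on the four resulting terms, the trace inequality (\ref{trace}) together with approximation properties of $Q_0$, $Q_b$, $\bbQ_h$, and finally the estimates (\ref{qh-qq}) and (\ref{ul2}) — including the correct observation that the $\l\delta_h,\epsilon_h\r_\pT$ term is what forces the use of the $L^2$ superconvergence bound (\ref{ul2}) and hence the $H^2$-regularity hypothesis. The only cosmetic difference is that you invoke the $L^2$-orthogonality of $Q_b$ before applying the trace inequality, whereas the paper passes directly to the trace inequality via the boundedness of the projection; both yield the same $h^{k+3/2}$ per-element rates.
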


\begin{proof} By letting $\bv=\{0, \delta_h\bn\}$, we have $\bv_0=0$ and
$(\nabla_w\bv, \epsilon_h)_T = \l\delta_h, \epsilon_h\r_\pT$. It
follows from (\ref{err1}) that
\begin{eqnarray*}
\|\delta_h\|^2_{\pT}&=&\l\delta_h,\bv_b\cdot\bn\r_\pT\\
&=&-s_{T}(\be_h,\bv)+(\nabla_w\cdot\bv,\epsilon_h)_T+s_T(Q_h\bq,\bv) +\ell_T(u,\bv)\nonumber\\
&=&h_T\l \be_0\cdot\bn-\be_b\cdot\bn, \delta_h\r_\pT+\l \delta_h,\epsilon_h\r_\pT\nonumber\\
&& -h_T \l Q_0\bq\cdot\bn-Q_b(\bq\cdot\bn), \delta_h\r_\pT -\langle
\delta_h,\bbQ_hu-u\rangle_{\partial T}. 
\end{eqnarray*}
Thus, from the usual Cauchy-Schwarz inequality, we obtain
\begin{equation}\label{lambda}
\begin{split}
\|\delta_h\|_{\pT}\lesssim & \
h_T\|\be_0\cdot\bn-\be_b\cdot\bn\|_\pT
+ \|\epsilon_h\|_\pT \\
&  + h_T \|Q_0\bq\cdot\bn-Q_b(\bq\cdot\bn)\|_\pT+\|\bbQ_hu-u\|_\pT.
\end{split}
\end{equation}
By first applying the trace inequality (\ref{trace}) to the last
three terms on the right-hand side of (\ref{lambda}), and then
summing over all the element $T\in\T_h$ we obtain
\begin{equation}\label{lambda-new}
\begin{split}
\sum_{T\in\T_h} h_T\|\delta_h\|_{\pT}^2\lesssim & \
h^2\sum_{T\in\T_h} h_T\|\be_0\cdot\bn-\be_b\cdot\bn\|_\pT^2
+ \sum_{T\in\T_h} (\|\epsilon_h\|_T^2 + h_T^2\|\nabla\epsilon_h\|_T^2) \\
&  + h^2 \sum_{T\in\T_h}(\|Q_0\bq-\bq\|_T^2 +
h_T^2\|\nabla(Q_0\bq-\bq)\|_T^2)\\
& + \sum_{T\in\T_h}(\|\bbQ_h u -u\|_T^2 + h_T^2\|\nabla(\bbQ_h u
-u)\|_T^2)\\
\lesssim & \ h^2 \3bar \be_h\3bar^2 + \|\epsilon_h\|^2 +
h^2\|\epsilon_h\|_{1,h}^2 + h^{2k+4}(\|\bq\|_{k+1}^2+\|u\|_{k+2}^2).
\end{split}
\end{equation}
Combining the above inequality with the error estimates
(\ref{qh-qq}) and (\ref{ul2}) yields (\ref{err-l}). This completes
the proof of the theorem.
\end{proof}

\section{Numerical Experiments}

In this section, we present some numerical results for the
hybridized WG-MFEM scheme (\ref{wg1})-(\ref{wg3}) based on the
lowest order element (i.e., $k=0$) for the second order elliptic
problem (\ref{mix})-(\ref{bc1}). Recall that for the lowest order
hybridized WG-MFEM, the corresponding finite element spaces are
given by
\begin{eqnarray*}
\M_h &=& \bigotimes_{T\in\T_h} V_0(T)\times W_1(T),\\
\Lambda_h &=& \{\mu:\mu|_e\in P_0(e),e\in\E_h\}.
\end{eqnarray*}

For any given $\bv=\{\bv_0,\bv_b\}$, the discrete weak divergence
$\nabla_w\cdot\bv\in P_1(T)$ is computed locally on each element $T$
as follows: Find $\nabla_w\cdot \bv \in P_1(T)$ such that
$$
(\nabla_w\cdot \bv,\tau)_T=-(\bv_0,\nabla\tau)_T+\l
\bv_b\cdot\bn,\tau\r_{\partial T},\qquad \forall \tau\in P_1(T).
$$

Let $(\bq_h,u_h,\lambda_h)\in \M_h\times\Lambda_h$ be the hybridized
WG-MFEM approximate solution arising from (\ref{wg1})-(\ref{wg3})
and $(\bq,u)$ be the exact solution of (\ref{mix})-(\ref{bc1}),
respectively. In our numerical experiments, we compare the numerical
solutions with the $L^2$ projection of the exact solution in various
norms for their difference:
$$
\be_h=\{\be_0,\be_b\}=Q_h\bq-\bq_h,\ \epsilon_h=\mathbb{Q}_hu-u_h,\
\delta_h=Q_b u - \lambda_h.
$$
In particular, the following norms are used to measure the scale of
the error:
\begin{align*}
&H^1\mbox{-norm:}\quad \|\epsilon_h\|_{1,h}=\left(\sum_{T\in\mathcal{T}_h}\|\nabla
\epsilon_h\|_T^2+h^{-1}\sum_{e\in\mathcal{E}_h}\|\jump{\epsilon_h}\|_e^2\right)^{\frac12}, \\
&L^2\mbox{-norm:}\quad \3bar \be_h
\3bar=\bigg(\sum_{T\in\mathcal{T}_h}\int_T|\be_0|^2dx
+h_T\int_{\partial T}|\be_0\cdot\bn-\be_b\cdot\bn|^2ds\bigg)^{\frac12},\\
&{L^2}\mbox{-norm:}\quad \| \delta_h \|=\bigg(\sum_{T\in\mathcal{T}_h}h_T
\|\lambda_h-Q_bu\|^2_{\pT\setminus\partial\Omega}\bigg)^{\frac12},\\
&L^2\mbox{-norm:}\quad
\|\epsilon_h\|=\bigg(\sum_{T\in\mathcal{T}_h}\int_T|\epsilon_h|^2dx\bigg)^{\frac12}.
\end{align*}

\subsection{Example 1}  In this test case, the domain is the unit square
$\Omega=(0,1)\times(0,1)$ and the coefficient is
$\alpha=\frac{1}{(1+x)(1+y)}$. The exact solution is given by
$u=\sin(\pi x)\sin(\pi y)$.

\begin{table}[h!]
\caption{Example 1. Convergence rate on triangular elements.}\label{table:ex1}
\center
\begin{tabular}{||c||cc|cc|cc|cc||}
\hline\hline
$h$ & $\3bar\be_h\3bar$ & order & $\|\delta_h\|$& order  & $\|\epsilon_h\|_{1,h}$& order  & $\|\epsilon_h\|$& order \\
\hline\hline
   1/4    &2.93e-1   &    -     &{3.10e-2}   &    -     &1.22      &  -    &1.91e-1   &-   \\ \hline
   1/8    &1.47e-1   &{0.99}   &{8.17e-3}   &{1.92}   &5.07e-1   &1.27   &4.76e-2   &2.01\\ \hline
   1/16   &7.39e-2   &{1.00}   &{2.07e-3}   &{1.98}   &2.39e-1   &1.08   &1.19e-2   &2.00\\ \hline
   1/32   &3.70e-2   &{1.00}   &{5.20e-4}   &{1.99}   &1.18e-1   &1.02   &2.97e-3   &2.00\\ \hline
   1/64   &1.85e-2   &1.00      &{1.30e-4}   &{2.00}  &5.87e-2   &1.01   &7.42e-4   &2.00\\ \hline
   1/128  &9.25e-3   &1.00      &{3.25e-5}   &{2.00}      &2.93e-2   &1.00   &1.86e-4   &2.00\\ \hline\hline
\end{tabular}
\end{table}

This numerical experiment was performed on uniform triangular
partitions for the domain. The triangular partitions are constructed
as follows: 1) first uniformly partition the domain into $n\times n$
sub-rectangles; 2) then divide each rectangular element by the
diagonal line with a negative slope. The mesh size is denoted by
$h=1/n.$ Table \ref{table:ex1} shows the convergence rate for the
hybridized WG-MFEM solutions measured in different norms. The
results show that the hybridized WG-MFEM approximates are convergent
with rate $O(h)$ in $H^1$ and $O(h^2)$ in $L^2$ norms.

Recall that the primal variable $u$ is approximated by the Lagrange
multiplier $\lambda_h$ as a piecewise constant function on the
wired-basket $\E_h$. The error function $\delta_h$, which is the
difference of $\lambda_h$ and the $L^2$ projection of the exact
solution on each edge $e\in\E_h$, is shown to be convergent at the
rate of $O(h^2)$. Thus, the Lagrange multiplier is a superconvergent
approximation of the exact solution at the mid-point of each edge
$e\in\E_h$. This result is similar to the hybridized mixed finite
element method.

\subsection{Example 2} In our second test, the domain $\Omega$ is again the unit square.
But the finite element partitions are given by the uniform
rectangular meshes. The coefficient $\alpha$ is given by $\alpha=1$.
The data is chosen so that the exact solution is $u=\sin(\pi
x)\cos(\pi y)$. The numerical results are presented in Table
\ref{table:ex2}.

\begin{table}[h!]
\caption{Example 2. Convergence rate on rectangular elements.}\label{table:ex2}
\center
\begin{tabular}{||c||cc|cc|cc|cc||}
\hline\hline
$h$ & $\3bar\be_h\3bar$ & order & $\|\delta_h\|$& order  & $\|\epsilon_h\|_{1,h}$& order  & $\|\epsilon_h\|$& order \\
\hline\hline
   1/4    &9.19e-1  &   -      &{ 1.86e-2}   &   -   &2.08     &     -    & 8.70e-1   &   -\\ \hline
   1/8    &4.84e-1  &{ 0.93}   &{ 4.57e-3}   &2.03   &2.09     &     -    & 2.85e-1   &1.61\\ \hline
   1/16   &2.45e-1  &{ 0.98}   &{ 1.14e-3}   &{ 2.01}   &1.33     & { 0.65}  & 7.90e-2   &1.85\\ \hline
   1/32   &1.23e-1  &{ 1.00}   &{ 2.84e-4}   &{ 2.00}   &7.29e-1  & { 0.87}  & 2.06e-2   &1.94\\ \hline
   1/64   &6.15e-2  &{ 1.00}   &{ 7.10e-5}   &{ 2.00}   &3.78e-1  & { 0.95}  & 5.25e-3   &1.97\\ \hline
   1/128  &3.08e-2   &1.00     &{ 1.77e-5}   &{ 2.00}   &1.92e-1  & { 0.98}  & 1.31e-3   &2.00\\ \hline\hline
\end{tabular}
\end{table}

Table \ref{table:ex2} shows the optimal rate of convergence for the
numerical solution in $H^1$ and $L^2$ norms. Once again, we see a
superconvergence for the solution on the wired-basket.

\subsection{Example 3} The test problem here is the same as Example 2, but the
finite element partitions consist of quadrilaterals constructed as
follows. Starting with a coarse quadrilateral mesh shown as in
Figure \ref{fig:ex3} (Left), we successively refine each
quadrilateral by connecting its barycenter with the middle points of
its edges, shown as in Figure \ref{fig:ex3} (Right). The numerical
results are presented in Table \ref{table:ex3}. All the numerical
results are in consistency with the theory developed in this paper.

\begin{figure}[h!]
\centering
\begin{tabular}{cc}
  \resizebox{2.3in}{2in}{\includegraphics{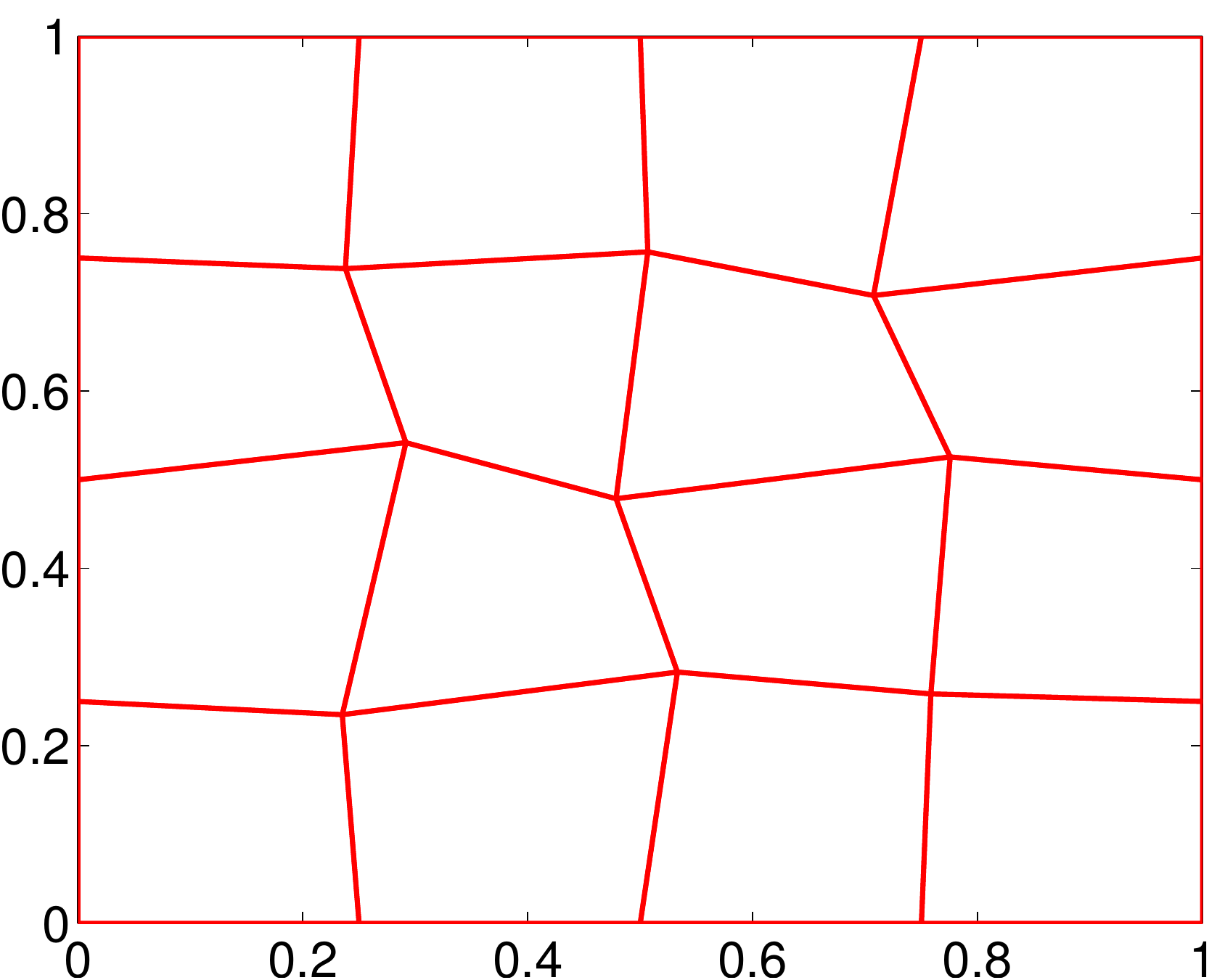}}
  \resizebox{2.4in}{2in}{\includegraphics{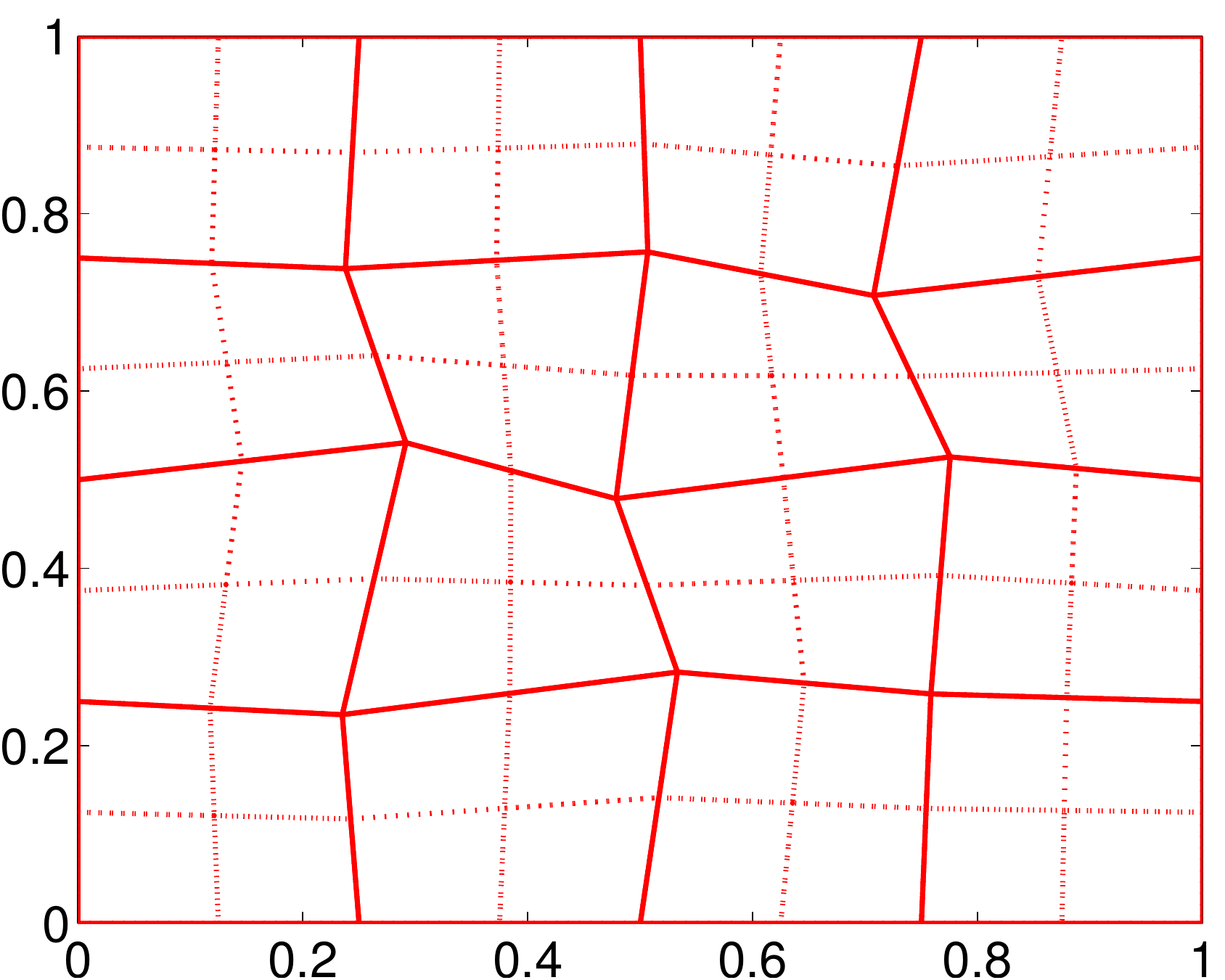}}
\end{tabular}
\caption{Mesh level 1 (Left) and mesh level 2 (Right) for example
3.}\label{fig:ex3}
\end{figure}

\begin{table}[h!]
\caption{Example 3. Convergence rate on quadrilateral
elements.}\label{table:ex3} \center
\begin{tabular}{||c||cc|cc|cc|cc||}
\hline\hline
$h$ & $\3bar\be_h\3bar$ & order & $\|\delta_h\|$& order  & $\|\epsilon_h\|_{1,h}$& order  & $\|\epsilon_h\|$& order \\
\hline\hline
   2.86e-1   &8.89e-1   &-         &{ 3.10e-2}   &-          &2.03     &     -    &8.08e-1   &-   \\ \hline
   1.43e-1   &4.74e-1   &{ 0.91}   &{ 6.30e-3}   &{ 2.30}       &1.95     &{ 0.56}   &2.71e-1   &1.58\\ \hline
   7.16e-2   &2.42e-1   &{ 0.97}   &{ 1.44e-3}   &{ 2.13}       &1.27     &{ 0.62}   &7.66e-2   &1.82\\ \hline
   3.58e-2   &1.21e-1   &{ 1.00}   &{ 3.51e-4}   &{ 2.04}   &7.08e-1  &{ 0.84}   &2.03e-2   &1.92\\ \hline
   1.79e-2   &6.03e-2   &1.01      &{ 8.71e-5}   &{ 2.01}      &3.71e-1  &{ 0.93}   &5.25e-3   &1.95\\ \hline
   8.95e-3   &2.95e-2   &1.03      &{ 2.17e-5}   &{ 2.00}       &1.90e-1  &{ 0.97}   &1.36e-3   &1.95\\ \hline\hline
\end{tabular}
\end{table}

\newpage

\end{document}